\theoremstyle{plain}
\newtheorem{thm}{Theorem}[section]
\newtheorem{prop}[thm]{Proposition}
\newtheorem{lem}[thm]{Lemma}
\theoremstyle{definition}
\theoremstyle{remark}
\numberwithin{equation}{section}
\DeclareMathOperator{\Lip}{Lip}
\title[Lipschitz-free space over length space is LASQ but not ASQ]{The Lipschitz-free space over length space is locally almost square\\ but never almost square}
\author{Rainis Haller}%
\author{Jaan Kristjan Kaasik}%
\author{Andre Ostrak}%
\address{Institute of Mathematics and Statistics, University of Tartu, Narva mnt 18, 51009, Tartu, Estonia}%
\email{rainis.haller@ut.ee}%
\email{jaan.kristjan.kaasik@ut.ee}
\email{andre.ostrak@ut.ee}
\subjclass{Primary 46B04; Secondary 46B20}%
\keywords{Lipschitz-free space, length metric space, almost square, locally almost square, diameter 2 property}%
\date{}
\begin{document}

\begin{abstract}
We prove that the Lipschitz-free space over a metric space $M$ is locally almost square whenever $M$ is a length space. Consequently, the Lipschitz-free space is locally almost square if and only if it has the Daugavet property. We also show that a Lipschitz-free space is never almost square.
\end{abstract}

\maketitle
\section{Introduction}
Throughout the paper we consider only nontrivial real Banach spaces and metric spaces which have more than one element.

Let $M$ be a metric space with a distinguished point $0\in M$. By $\Lip_0(M)$ we denote the Banach space of all Lipschitz functions from $M$ to $\mathbb R$ vanishing at $0$ with respect to the norm
\[
\|f\|=\sup\bigl\{\frac{|f(p)-f(q)|}{d(p,q)}\colon p,q\in M, p\neq q\bigr\},
\]
i.e., $\|f\|$ is the Lipschitz constant of $f\in \Lip_0(M)$. 
It is well known that $\operatorname{Lip}_0(M)$ is a dual space, whose canonical predual is the \emph{Lipschitz-free space}
\[
\mathcal F(M)\coloneqq \overline{\operatorname{span}}\{\delta_p\colon p\in M\}\subset \operatorname{Lip}_0(M)^\ast,
\]
where $\delta_p(f)=f(p)$ for every $f\in \operatorname{Lip}_0(M)$ (see, e.g, \cite{MR2030906, MR3792558} for the proof and for a more detailed study of these Banach spaces).

A metric space $(M, d)$ is called a \emph{length space} if, for every pair of different points $p, q \in M$, the distance $d(p, q)$ is equal to the infimum of the length of rectifiable simple paths joining them. In the definition of a length space, one usually does not specify that the paths are simple, i.e., injective. However, it seems to be well known and not hard to prove that this extra assumption can be made without loss of generality.

%A complete metric space $(M,d)$ is a length space if and only if it has approximate midpoints, i.e., for all $x, y \in M$ and $\varepsilon>0$ there exists $z\in M$ such that $\max{d(x, z), d(z, y)} \leq d(x, y)/2 +\varepsilon$.

We are interested in length spaces because of the following characterisation of certain geometric properties of the Lipschitz-free space $\mathcal F(M)$ and the Lipschitz functions space $\Lip_0(M)$.  

\begin{thm}\label{thm: M length iff F(M) DP}%[{\cite[Theorem~1.5]{MR3912824}}]
    Let $M$ be a complete metric space. Then the following assertions are equivalent:
    \begin{enumerate}
        \item $M$ is length;
        \item $\Lip_0(M)$, i.e., $\mathcal{F}(M)^*$, has the Daugavet property;
        \item $\mathcal F(M)$ has the Daugavet property;
        \item $\mathcal F(M)$ has the strong diameter $2$ property;
        \item $\mathcal F(M)$ has the diameter $2$ property;
        \item $\mathcal F(M)$ has the slice diameter $2$ property;
        \item the unit ball of $\mathcal F(M)$ does not have strongly exposed points;
        \item $M$ has property $(Z)$.
    \end{enumerate}
\end{thm}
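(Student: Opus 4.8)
The plan is to prove the eight assertions equivalent through a single cycle, relegating the generic Banach-space facts and the purely metric facts to citations and spending the effort on the two steps that really use the interaction between the length structure and the free space. Concretely I would run the cycle $(1)\Rightarrow(3)\Rightarrow(4)\Rightarrow(5)\Rightarrow(6)\Rightarrow(7)\Rightarrow(8)\Rightarrow(1)$ and dispose of $(2)$ separately: since $\Lip_0(M)=\mathcal F(M)^\ast$ and the Daugavet property is self-dual (a Banach space has it precisely when its dual does), $(2)\Leftrightarrow(3)$ is automatic.

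The descending block $(3)\Rightarrow(4)\Rightarrow(5)\Rightarrow(6)\Rightarrow(7)$ is the standard diameter-two hierarchy valid in every Banach space: the Daugavet property yields the strong diameter $2$ property; this gives the diameter $2$ property because every nonempty relatively weakly open subset of the ball contains a convex combination of slices (Bourgain); the diameter $2$ property contains the slice diameter $2$ property as slices are weakly open; and the slice diameter $2$ property excludes strongly exposed points, since an exposing functional would cut out slices of vanishing diameter. The metric equivalence $(8)\Rightarrow(1)$ I would obtain by reading property $(Z)$ as the existence, for every pair of distinct points and every $\varepsilon>0$, of an approximate metric midpoint, and then invoking the classical theorem that a \emph{complete} metric space with this property is length (iterate the midpoint construction along dyadic refinements and use completeness to produce a rectifiable path); this is exactly where the completeness hypothesis is consumed, and the converse $(1)\Rightarrow(8)$ is immediate.

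This leaves the two substantial implications. For $(1)\Rightarrow(3)$ the engine is the lemma that, when $M$ is length, every slice of $B_{\mathcal F(M)}$ contains molecules $m_{uv}=(\delta_u-\delta_v)/d(u,v)$ of arbitrarily small support diameter $d(u,v)$: a slice cut by $f\in S_{\Lip_0(M)}$ already contains some molecule $m_{pq}$, and subdividing a near-geodesic from $q$ to $p$ into steps below any prescribed size forces, by a telescoping average of the increments of $f$, one sub-molecule to stay in the slice. Given a molecule target $m_{p_0q_0}$ and a slice $S$, I would then pick a tiny $m_{uv}\in S$ and use McShane extension to build $g\in S_{\Lip_0(M)}$ with slope $1$ both from $q_0$ to $p_0$ and from $v$ to $u$; the only obstruction is the compatibility of the four cross-Lipschitz inequalities among $p_0,q_0,u,v$, which holds once $d(u,v)$ is small, whence $g(m_{p_0q_0}+m_{uv})=2$ and $\|m_{p_0q_0}+m_{uv}\|>2-\varepsilon$. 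Finally $(7)\Rightarrow(8)$ I would argue contrapositively: if $(Z)$ fails at some pair $x\neq y$, so that $d(x,z)+d(z,y)\ge d(x,y)+\varepsilon_0\min\{d(x,z),d(z,y)\}$ for all $z$ and a fixed $\varepsilon_0>0$, then the functional $t\mapsto\tfrac12\bigl(d(t,y)-d(t,x)\bigr)$ strongly exposes $m_{xy}$, because the quantitative absence of midpoints forces every molecule on which it is nearly maximal to have its endpoints near $x$ and $y$, so the slices $\{g>1-\alpha\}$ shrink to $m_{xy}$.

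The main obstacle is upgrading the molecule-target computation in $(1)\Rightarrow(3)$ to the full Daugavet property for an \emph{arbitrary} unit vector: the tiny-molecule-plus-McShane trick manufactures a near-maximal slope for $g$ on the target only because the target is itself a molecule, whereas a general element of $S_{\mathcal F(M)}$ is merely an infinite convex combination of signed molecules, for which a single slice element witnessing $\|x+y\|\approx2$ must be produced simultaneously. Circumventing this---either by a direct reduction to molecule targets or, more smoothly, by transferring the whole argument to $\Lip_0(M)$ through $(2)\Leftrightarrow(3)$ and verifying the Daugavet property there on slope-slices---is the delicate technical core; the strongly-exposed construction in $(7)\Rightarrow(8)$ is the second, milder, obstacle, since strong exposure must be checked against all of $B_{\mathcal F(M)}$ and not only against molecules.
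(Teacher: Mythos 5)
The paper does not actually prove this theorem: it is assembled entirely from the literature, and the paragraph following the statement records which implication is taken from which reference. So the comparison is between your sketch and the published proofs. Your overall architecture (one cycle, with the generic Banach-space implications $(3)\Rightarrow(4)\Rightarrow(5)\Rightarrow(6)\Rightarrow(7)$ dispatched by standard arguments) matches how the literature assembles the result, and that descending block is handled correctly.

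Two of your steps contain genuine errors. First, the Daugavet property is \emph{not} self-dual: it passes from $X^*$ down to $X$, but not from $X$ up to $X^*$ (for instance $C[0,1]$ has it while its dual, an $L_1(\mu)$-space with atoms, does not). So $(2)\Leftrightarrow(3)$ is not ``automatic''; only $(2)\Rightarrow(3)$ is generic, and the reverse direction must be recovered by closing the cycle through $(1)$, which is exactly how the paper's citation list arranges things: the substantive step is $(1)\Rightarrow(2)$, i.e.\ establishing the Daugavet property of $\Lip_0(M)$ directly on weak-star slices, after which $(3)$ follows for the predual. Second, and more seriously, property $(Z)$ is not the approximate-midpoint property: it asks for a point $z\neq x,y$ with $d(x,z)+d(z,y)\le d(x,y)+\varepsilon\min\{d(x,z),d(z,y)\}$, and such a $z$ is permitted to sit arbitrarily close to $x$ or to $y$, so the dyadic midpoint iteration you invoke never gets started. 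Closing precisely this gap --- showing that a \emph{complete} metric space with property $(Z)$ is a length space --- is the main theorem of the reference the paper cites for $(8)\Rightarrow(1)$, and it is far from the classical midpoint argument. Finally, you correctly identify that upgrading the molecule computation in $(1)\Rightarrow(3)$ to arbitrary unit vectors is the technical core, but you leave it unresolved; the literature circumvents it by working in $\Lip_0(M)$ as just described rather than in $\mathcal F(M)$.
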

This result is directly stated in \cite[Theorem~1.5]{MR3912824}. In fact, the main contribution of \cite{MR3912824} was the implication $(8)\Rightarrow (1)$. The implications $(3)\Rightarrow (1)$ and $(7)\Leftrightarrow (8)$ were already obtained in \cite{MR3794100}, and $(1)\Rightarrow (2)$ and $(1)\Rightarrow (8)$ were obtained in \cite{MR2379289}. The implications $(2)\Rightarrow (3)\Rightarrow (4)\Rightarrow(5)\Rightarrow(6)\Rightarrow (7)$ hold in the more general setting when $\mathcal{F}(M)$ is replaced with any Banach space.

According to \cite{MR3415738}, a Banach space $X$ with the unit sphere $S_X$ is said to be 
\begin{itemize}
    \item[(a)] \emph{almost square} (briefly, \emph{ASQ}) if, for every $\varepsilon>0$ and every finite subset $\{x_1,\dotsc,x_n\}$ of $S_X$, there exists $y\in S_X$ such that \[\|x_i\pm y\|\leq 1+\varepsilon\] holds for every $i\in\{1,\dotsc,n\}$;
    \item[(b)] \emph{locally almost square} (briefly, \emph{LASQ}) if, for every $\varepsilon>0$ and every $x\in S_X$, there exists $y\in S_X$ such that \[\|x\pm y\|\leq 1+\varepsilon.\]
\end{itemize}
These notions have been considered in several papers (e.g., in \cite{MR4073466, MR3466077, MR3619230}).
Since every ASQ Banach space has the strong diameter 2 property \cite[Proposition~2.5]{MR3415738} and every LASQ Banach space has the slice diameter 2 property \cite[Proposition~2.5]{MR3235311}, it is natural to wonder whether almost squareness or locally almost squareness of $\mathcal F(M)$ is also characterised by $M$ being length.

In this paper, we show that $\mathcal F(M)$ is never ASQ and that $\mathcal{F}(M)$ is LASQ whenever $M$ is length. 

However, some natural questions arise from these results.
By \cite{MR3415738, MR4023351}, an ASQ Banach space is weakly almost square (briefly, WASQ) and has the symmetric strong diameter $2$ property (briefly, SSD$2$P). Furthermore, if a Banach space is WASQ, then it is LASQ, and, if a Banach space has the SSD$2$P, then it has the strong diameter $2$ property. It is known that the Lipschitz-free space $\mathcal F([0,1])=L_1([0,1])$ is WASQ (see, e.g., \cite{MR3415738} or \cite{MR3235311}), but does not have the SSD$2$P (see, e.g., \cite{MR3917942}). We don't know whether the Lipschitz-free space $\mathcal{F}(M)$ is WASQ whenever $M$ is length or if there exists a Lipschitz-free space which has the SSD$2$P.

Every Lipschitz functions space with the Daugavet property has the SSD$2$P, but the converse is not true (see, e.g., \cite{MR3985517}). A very recent study \cite{HOP} shows that the diameter $2$ property, the strong diameter $2$ property, and the SSD$2$P are all different for the Lipschitz functions spaces. However, the metric characterisations of these diameter $2$ properties for the Lipschitz functions spaces seem to be unknown.

\subsection*{Notations} For $p,q\in M$ with $p\neq q$, 
we denote by $m_{p,q}$ the norm one element $\tfrac{\delta_p-\delta_q}{d(p,q)}$ in $\mathcal{F}(M)$. If $p=q$, then we consider $m_{p,q} = 0$. We call the elements $m_{p,q}$ the \emph{elementary molecules}.

Recall that a function $\gamma\colon[\alpha,\beta]\rightarrow M$ is called a \emph{path from $p$ to $q$}, when $\gamma$ is continuous and $\gamma(\alpha)=p$ and $\gamma(\beta)=q$. We denote a path from $p$ to $q$ usually by $\gamma_{p,q}$. For a rectifiable path $\gamma\colon [\alpha,\beta]\to M$ we denote its length by $L(\gamma)$, i.e.,
\[
L(\gamma) = \sup\bigl\{\sum_{i=0}^{n-1} d(\gamma(t_i), \gamma(t_{i+1})) \colon n\in \mathbb{N},\, \alpha=t_0\leq \dotsb\leq t_n=\beta \bigr\}.
\]
By default we assume that the domain interval for a path is $[0,1]$. 

Given a point $p$ in $M$ and $r>0$, we denote by $B(p,r)$ the open ball in $M$ centred at $p$ of radius $r$.

\section{Main lemma}
The following is our technical tool for obtaining that the Lipschitz-free space $\mathcal{F}(M)$ is LASQ whenever $M$ is length (see Theorem~\ref{thm: length is LASQ} below).
\begin{lem}\label{lem: main}
%Äkki lemmas tehagi läbi arcidega, saame tugevama tulemuse, mida mujal ka kasutada?
Let $M$ be a length space, let $n\in\mathbb{N}$, and set $C_n = (4^n-1)/{3}$. For every $\varepsilon>0$ and for every $x = \sum_{i=1}^n \lambda_i\, m_{p_i, q_i}\in \mathcal{F}(M)$ %Seda hulka pole defineeritud praegu
with $\lambda_1, \ldots, \lambda_n>0$ and $\sum_{i=1}^n \lambda_i\leq 1$, there exist $m\in \mathbb{N}$ with $m\leq C_n$, $\alpha_1, \ldots, \alpha_m>0$, $a_1,\ldots, a_m, b_1, \ldots, b_m\in M$, and simple paths $\gamma_{a_1,b_1},\ldots, \gamma_{a_m, b_m}$ such that
\begin{itemize}
    \item[(a)] $\|x - \sum_{j=1}^m \alpha_j\, m_{a_j, b_j}\|< \varepsilon$;
    \item[(b)] $\sum_{j=1}^m \alpha_j < \sum_{i=1}^n \lambda_i+\varepsilon$;
    \item[(c)] images of paths $\gamma_{a_1, b_1}, \ldots, \gamma_{a_m, b_m}$ are pairwise disjoint or may intersect only at endpoints;
    \item[(d)] for every $j\in \{1,\ldots, m\}$, we have $L(\gamma_{a_j, b_j}) < d(a_j, b_j)+ \varepsilon\delta_j$, where $\delta_j =\min\{1, \frac{d(a_j, b_j)}{C_n \alpha_j}\}$.
\end{itemize}
\end{lem}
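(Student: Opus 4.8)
The plan is to argue by induction on $n$, exploiting the identity $C_n=4C_{n-1}+1$ (with $C_1=1$). The geometric picture is to represent $x$ as a ``flow'' along a finite family of near-geodesic arcs in $M$ that meet only at their endpoints, each arc carrying a single elementary molecule; conditions (a)--(d) then say that this representation is close to $x$ in $\mathcal F(M)$, has almost the same total mass, uses boundedly many arcs, and consists of almost-geodesic arcs.

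For the base case $n=1$ I would simply invoke that $M$ is length: there is a simple path $\gamma_{p_1,q_1}$ with $L(\gamma_{p_1,q_1})<d(p_1,q_1)+\varepsilon\delta_1$, and taking $m=1$, $\alpha_1=\lambda_1$, $a_1=p_1$, $b_1=q_1$ settles (a)--(d) at once. The key algebraic tool throughout is the telescoping identity: for any $z\in M$,
\[
m_{p,q}=\tfrac{d(p,z)}{d(p,q)}\,m_{p,z}+\tfrac{d(z,q)}{d(p,q)}\,m_{z,q},
\]
which lets me subdivide a molecule at an interior point \emph{without changing the element} $x$; moreover the total mass passes from $\alpha$ to $\alpha\,(d(p,z)+d(z,q))/d(p,q)$, so it can only grow, and only by an amount governed by the excess length $d(p,z)+d(z,q)-d(p,q)\le L(\gamma_{p,q})-d(p,q)$. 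This is exactly why the excess in (d) is measured against $\delta_j=\min\{1,d(a_j,b_j)/(C_n\alpha_j)\}$: the second term forces the \emph{relative} mass increase of each of the at most $C_n$ arcs below $\varepsilon/C_n$, and summing yields (b), while the first term keeps the absolute excess below $\varepsilon$, which drives (a).

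For the inductive step I would apply the hypothesis to $x'=\sum_{i=1}^{n-1}\lambda_i m_{p_i,q_i}$ (legitimate since $\sum_{i<n}\lambda_i\le 1$), obtaining at most $C_{n-1}$ near-geodesic arcs meeting only at endpoints. I then fix a simple near-geodesic path $\gamma_n$ for the remaining molecule $\lambda_n m_{p_n,q_n}$ and resolve its intersections with the existing configuration. Re-routing $\gamma_n$ along an existing arc between two consecutive intersection points does not move the endpoints, hence leaves the represented element unchanged, and keeps the path near-geodesic because sub-arcs of near-geodesics are near-geodesic. After finitely many such re-routings I may assume that $\gamma_n$ meets each of the at most $C_{n-1}$ existing arcs either in a single point (a transversal crossing) or along a single common sub-arc, the latter being merged into one edge carrying the sum of the two weights (both equal to a multiple of the same molecule $m_{z,w}$). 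Splitting $\gamma_n$ and each interacting arc at the resulting finitely many points then yields a family in which distinct arcs again meet only at endpoints. Each old arc is cut into at most three pieces, contributing at most $3C_{n-1}$ edges, while $\gamma_n$ contributes at most $C_{n-1}+1$ edges of its own (its remaining pieces coincide with merged sub-arcs already counted); hence the total is at most $3C_{n-1}+(C_{n-1}+1)=4C_{n-1}+1=C_n$, exactly the bound required.

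I expect the main obstacle to be precisely this intersection-resolution step. Two difficulties must be handled with care. First, the intersection set of two merely continuous near-geodesics can be topologically nasty, so I would first approximate each path by one meeting the rest of the configuration in finitely many points; this is the only genuine source of the error $\varepsilon$ in (a), since the telescoping identity otherwise preserves the element exactly. Second, re-routing $\gamma_n$ to de-cross it from one arc must not create new, uncontrolled crossings with the others, so the re-routings have to be performed in a single consistent pass (for instance, processing intersection points in the order they occur along $\gamma_n$) while simultaneously verifying that near-geodesy and the mass bound survive. Fixing the initial excesses of all chosen paths small enough relative to the finitely many thresholds $\varepsilon\delta_j$ that appear then guarantees (c) and (d) for the final family and closes the induction.
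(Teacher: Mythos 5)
Your overall strategy coincides with the paper's: induction on $n$ via the recursion $C_{n+1}=4C_n+1$, applying the inductive hypothesis with a smaller tolerance to the first $n$ molecules, choosing a near-geodesic simple path for the remaining one, re-routing it along the existing arcs where they overlap, splitting each old arc into at most three pieces and merging coincident sub-arcs, with exactly the count $3C_n+(C_n+1)=C_{n+1}$; the role you assign to $\delta_j$ in controlling the relative mass increase per arc is also the right one.

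The genuine gap is in the step you yourself flag as the main obstacle. Your plan is to first perturb the new path so that it meets the existing configuration in finitely many points (or in a single common sub-arc per old arc), and only then split. In a general length space this reduction is impossible: take $M=[0,1]$ with an existing arc covering $[0,1]$ and a new molecule $m_{0,1/2}$ --- every path from $0$ to $1/2$ overlaps the old arc on an entire interval --- and more generally the intersection-time set $\{t\colon\gamma(t)\in\operatorname{Im}\gamma_{a_j,b_j}\}$ is merely a closed set (possibly Cantor-like), so no approximation keeps the path in $M$, simple, near-geodesic, and meeting each old arc finitely often. The correct argument never needs this reduction: one builds a finite partition $0=t_0<\dotsb<t_l=1$ directly, alternately taking the \emph{maximum} intersection time with the arcs currently being touched (and re-routing the entire stretch in between along such an arc) and the \emph{minimum} intersection time with the arcs not yet touched; finiteness of $l$ comes from each old arc being used up after one re-routing, not from finiteness of the intersection sets. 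Relatedly, your accounting of the error in (a) is off --- the re-routing and telescoping are exact, so the only error in (a) is inherited from the inductive hypothesis --- and you gloss over the one delicate estimate, namely verifying (d) for a merged sub-arc carrying weight $\alpha_{m+j}+\beta_k$, which forces the inductive tolerance $\varepsilon'$ to be an explicit function of $\varepsilon$, $C_{n+1}$, and $d(p_{n+1},q_{n+1})$ rather than merely ``small enough''.
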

\begin{proof}
The proof is by induction on $n$. The statement clearly holds for $n = 1$. Assume now that it holds for some particular $n$. Let $\varepsilon>0$ and let $x = \sum_{i=1}^{n+1}\lambda_i\, m_{p_i, q_i}\in \mathcal{F}(M)$ with $\lambda_1, \dotsc, \lambda_{n+1}>0$ and $\sum_{i=1}^{n+1} \lambda_i\leq 1$. We need only to consider the case where $p_{n+1}\neq q_{n+1}$. Let
\begin{align*}
    \varepsilon' = \frac{\varepsilon d(p_{n+1}, q_{n+1})}{C_{n+1}(1+d(p_{n+1}, q_{n+1}))}
\end{align*}
and let
\begin{align*}
    x' = \sum_{i=1}^n \lambda_i\, m_{p_i, q_i}.
\end{align*}
By assumption there exist $m\in \mathbb{N}$ with $m\leq C_n$, $\alpha_1', \ldots, \alpha_m'>0$, $a_1, \ldots, a_m, b_1,\ldots, b_m\in M$, and simple paths $\gamma_{a_1,b_1},\ldots, \gamma_{a_m, b_m}\colon [0,1]\to M$ such that
\begin{itemize}
    \item[(a$'$)] $\|x' - \sum_{j=1}^m \alpha_j'\, m_{a_j, b_j}\|< \varepsilon'$;
    \item[(b$'$)] $\sum_{j=1}^m \alpha_j' < \sum_{i=1}^n \lambda_i+\varepsilon'$;
    \item[(c$'$)] images of paths $\gamma_{a_1, b_1}, \ldots, \gamma_{a_m, b_m}$ are pairwise disjoint or may intersect only at endpoints;
    \item[(d$'$)] for every $j\in \{1,\ldots, m\}$, we have $L(\gamma_{a_j, b_j}) < d(a_j, b_j)+ \varepsilon'\delta_j$, where $\delta_j =\min\{1, \frac{d(a_j, b_j)}{C_n \alpha_j'}\}$.
\end{itemize}

Let
\[
y = \sum_{j = 1}^m \alpha_j'\, m_{a_j, b_j} + \lambda_{n+1}\, m_{p_{n+1}, q_{n+1}}.
\]
Then $\|x-y\| = \|x'- \sum_{j=1}^m \alpha_j'\, m_{a_j, b_j}\|< \varepsilon'< \varepsilon$. 

Since $M$ is a length space, there is a simple path $\gamma\colon[0,1]\to M$ from $p_{n+1}$ to $q_{n+1}$ such that $L(\gamma)< d(p_{n+1}, q_{n+1})+\varepsilon\delta$, where $\delta =\min\bigl\{1, \frac{d(p_{n+1},q_{n+1})}{C_{n+1}\lambda_{n+1}}\bigr\}$.
The path $\gamma$ may intersect the paths $\gamma_{a_1, b_1},\dotsc,$ $\gamma_{a_m, b_m}$ elsewhere than just at endpoints. Let us solve this problem.

We start by finding a special partition $0=t_0<\dotsb<t_l=1$ of the interval $[0,1]$ and by adjusting $\gamma$ in the subintervals of that partition if necessary. 
% By setting $u_0=\gamma(t_0),\dotsc,u_l=\gamma(t_l)$, and, for every $k\in \{1,\ldots, l\}$, 
% \[
% \beta_k = \lambda_{n+1}\frac{d(u_{k-1}, u_k)}{d(p_{n+1}, q_{n+1})},
% \]
% we will have 
% \[
% \lambda_{n+1}\, m_{p_{n+1}, q_{n+1}} = \sum_{k=1}^l \beta_k\, m_{u_{k-1}, u_k}.
% \]
Set $t_0=0$ and $u_0 = \gamma(t_0)$. We proceed by induction. Assume that we have found some $t_{k-1}<1$, where $k\in\mathbb N$. We will find $t_k$ and then set $u_k=\gamma(t_k)$ and
\[
\beta_k = \lambda_{n+1}\frac{d(u_{k-1}, u_k)}{d(p_{n+1}, q_{n+1})}.
\]
For any $j\in\{1,\dotsc,m\}$, let 
\[T_j=\{t\in[0,1]\colon \gamma(t)\in \operatorname{Im}\gamma_{a_j,b_j}\}.\]

If $\{t\in \bigcup_{j=1}^m T_j\colon t>t_{k-1}\}=\varnothing$, then set $t_{k}=1$ and $l=k$. Notice that $\gamma|_{(t_{k-1},t_k)}$ does not intersect any $\gamma_{a_j,b_j}$. From $u_{k-1}$ to $u_{k}$ we consider the path $\gamma_{u_{k-1},u_k}=\gamma|_{[t_{k-1},t_k]}$. By the choice of $\gamma$, we have
\begin{align*}
    L(\gamma_{u_{k-1},u_k}) &< d(u_{k-1}, u_k)+\varepsilon\delta %= d(u_{k-1}, u_k)+\varepsilon \min\Big\{1, \frac{d(p_{n+1}, q_{n+1})}{C_{n+1}\lambda_{n+1}}\Big\}
    =  d(u_{k-1}, u_k)+\varepsilon\min\bigl\{1,\frac{d(u_{k-1}, u_k)}{C_{n+1}\beta_k}\bigr\}.
\end{align*}

Otherwise, assume that $\{t\in \bigcup_{j=1}^m T_j\colon t>t_{k-1}\}\neq\varnothing$. Let \[J=\bigl\{j\in\{1,\dotsc,m\}\colon t_{k-1}\in T_j\bigr\}.\] If $\{t\in \bigcup_{j\in J}T_j\colon t>t_{k-1}\}=\varnothing$, then let 
\[t_{k}=\min\bigl\{t\in\bigcup_{\substack{j=1\\j\not\in J}}^m T_j\colon t>t_{k-1}\bigr\}.\] Again, notice that $\gamma|_{(t_{k-1},t_k)}$ does not intersect any $\gamma_{a_j,b_j}$. From $u_{k-1}$ to $u_{k}$ we consider the path $\gamma_{u_{k-1},u_k}=\gamma|_{[t_{k-1},t_k]}$. By the choice of $\gamma$, we have
\begin{align*}
    L(\gamma_{u_{k-1},u_k}) &< d(u_{k-1}, u_k)+\varepsilon\delta %= d(u_{k-1}, u_k)+\varepsilon \min\Big\{1, \frac{d(p_{n+1}, q_{n+1})}{C_{n+1}\lambda_{n+1}}\Big\}
    =  d(u_{k-1}, u_k)+\varepsilon\min\bigl\{1,\frac{d(u_{k-1}, u_k)}{C_{n+1}\beta_k}\bigr\}.
\end{align*}
But when $\{t\in \bigcup_{j\in J}T_j\colon t>t_{k-1}\}\neq\varnothing$, let 
\[
t_{k}=\max\bigl\{t\in \bigcup_{j\in J}T_j\colon t>t_{k-1}\bigr\}.
\]
In this case, there exists a $j\in J$ such that $u_k\in \operatorname{Im}\gamma_{a_j, b_j}$. Fix such a $j$. Let $r = \gamma_{a_j, b_j}^{-1}(u_{k-1})$ and $s = \gamma_{a_j, b_j}^{-1}(u_{k})$. If $r<s$, then define the path $\gamma_{u_{k-1}, u_k}$ as the restriction of the path $\gamma_{a_j, b_j}$ to $[r, s]$. Otherwise, define $\gamma_{u_{k-1}, u_k}$ as the restriction of the inverse path of $\gamma_{a_j, b_j}$ to $[1-r, 1-s]$.

We will now split $\gamma_{a_1, b_1}, \ldots, \gamma_{a_m, b_m}$. For every $j\in\{1,\dotsc,m\}$, let
\[S_j=\bigl\{t\in[0,1]\colon \gamma_{a_j,b_j}(t)\in\{\gamma(t_0),\ldots, \gamma(t_l)\}\bigr\},\]
and note that $|S_j|\leq 2$. If $S_j=\varnothing$, then let $c_j=a_j$ and $d_j=a_j$; other\-wise, let $c_j=\gamma_{a_j,b_j}(\min S_j)$ and $d_j=\gamma_{a_j,b_j}(\max S_j)$, and we consider the paths $\gamma_{a_j, c_j}$, $\gamma_{c_j, d_j}$, and $\gamma_{c_j, d_j}$ as the restrictions of path $\gamma_{a_j, b_j}$ to intervals $[0, \min S_j]$, $[\min S_j, \max S_j]$, and $[\max S_j, 1]$, respectively.
For every $j\in\{1,\dotsc,m\}$, let 
\[
\alpha_j=\alpha_j'\dfrac{d(a_j,c_j)}{d(a_j,b_j)},\qquad \alpha_{m+j}=\alpha_j'\dfrac{d(c_j,d_j)}{d(a_j,b_j)},\qquad \alpha_{2m+j}=\alpha_j'\dfrac{d(d_j,b_j)}{d(a_j,b_j)}.
\]

Note that 
\[
y=\sum_{j=1}^{m} \bigl(\alpha_j\, m_{a_j,c_j}+\alpha_{m+j}\, m_{c_j,d_j}+ \alpha_{2m+j}\, m_{d_j,b_j}\bigr)+\sum_{k=1}^l \beta_k\, m_{u_{k-1}, u_k}.
\]
We may and do assume that $a_j\neq c_j$, $c_j\neq d_j$, and $d_j\neq b_j$  for any particular $j\in\{1,\dotsc,m\}$; otherwise, the corresponding elementary molecule is $0$ and we drop it from this representation of $y$.

All the paths $\gamma_{a_j, c_j}$, $\gamma_{c_j, d_j}$, $\gamma_{d_j, b_j}$, and $\gamma_{u_{k-1}, u_k}$, where $j\in\{1,\dotsc,m\}$ and $k\in\{1,\dotsc,l\}$, are simple and pairwise disjoint or intersect just at the endpoints, except $\gamma_{c_j, d_j}$ and $\gamma_{u_{k-1}, u_k}$ may be the same paths or inverse paths to each other for some $j$ and $k$ (note that for every $k$ there exists at most one such $j$). If $\gamma_{c_j, d_j}$ and $\gamma_{u_{k-1}, u_k}$ are the same paths, then $m_{c_j,d_j} = m_{u_{k-1},u_k}$ and we gather these elementary molecules in the representation of $y$ by taking into account that \[\alpha_{m+j}\, m_{c_j,d_j}+\beta_k\, m_{u_{k-1},u_k} = (\alpha_{m+j}+\beta_k)\,m_{c_j,d_j}.\]
If $\gamma_{u_{k-1}, u_k}$ and $\gamma_{c_j, d_j}$ are inverse paths to each other, then $m_{u_{k-1},u_k}=-m_{c_j,d_j}$ and we gather these elementary molecules in the representation of $y$ by taking into account that \[\alpha_{m+j}\,m_{c_j,d_j}+\beta_k\, m_{u_{k-1},u_k}=(\alpha_{m+j}-\beta_k)\,m_{c_j,d_j}\]
if $\alpha_{m+j}\geq \beta_k$, and
\[\alpha_{m+j}\,m_{c_j,d_j}+\beta_k\, m_{u_{k-1},u_k} = (\beta_k-\alpha_{m+j})\,m_{d_j,c_j}\]
if $\alpha_{m+j}<\beta_k$. 

This representation of $y$ together with the remaining paths are the ones we were looking for. Note that the obtained representation of $y$ includes at most $4m+1$ elementary molecules. 
We have previously shown that $\|x-y\|<\varepsilon$. It is also clear that the images of all the corresponding paths are pairwise disjoint or intersect only at endpoints.

From $L(\gamma_{a_j, b_j}) < d(a_j, b_j)+ \varepsilon'\delta_j$, where $\delta_j =\min\{1, \frac{d(a_j, b_j)}{C_n \alpha_j'}\}$, it follows that 
\begin{align*}
L(\gamma_{a_j, c_j}) &< d(a_j, c_j)+ \varepsilon'\delta_j,\\
L(\gamma_{c_j, d_j}) &< d(c_j, d_j)+ \varepsilon'\delta_j,\\
L(\gamma_{d_j, b_j}) &< d(d_j, b_j)+ \varepsilon'\delta_j.
\end{align*}
We want to show that 
\begin{align*}
L(\gamma_{a_j, c_j}) &< d(a_j, c_j)+ \varepsilon\delta_j^1,\\
L(\gamma_{c_j, d_j}) &< d(c_j, d_j)+ \varepsilon\delta_j^2,\\
L(\gamma_{d_j, b_j}) &< d(d_j, b_j)+ \varepsilon\delta_j^3,
\end{align*}
where
\begin{align*}
\delta_j^1&=\min\bigl\{1,\frac{d(a_j, c_j)}{C_{n+1} \alpha_{j}}\bigr\},\\
\delta_j^2&=\min\bigl\{1, \frac{d(c_j, d_j)}{C_{n+1} (\alpha_{m+j}+\beta_k)}\bigr\},\\
\delta_j^3&=\min\bigl\{1, \frac{d(d_j, b_j)}{C_{n+1} \alpha_{2m+j}}\bigr\}.
\end{align*}
Notice that $\varepsilon>C_{n+1}\varepsilon'$, $C_{n+1}\delta_j^1\geq \delta_j$, and $C_{n+1}\delta_j^3\geq \delta_j$.
% Notice first that we clearly have $L(\gamma_{c_j, d_j}) < d(c_j, d_j)+ \varepsilon'\delta_j'$. This is because 
% \begin{align*}
%     L(\gamma_{c_j, d_j})&=L(\gamma_{a_j, b_j})-L(\gamma_{a_j, c_j})-L(\gamma_{d_j, b_j})\\
%     &<d(a_j,b_j)+\varepsilon'\delta_j'-d(a_j,c_j)-d(d_j,b_j)\\
%     &\leq d(c_j,d_j)+\varepsilon'\delta_j'.
% \end{align*}
Therefore it suffices to show that $\varepsilon'\delta_j\leq \varepsilon \delta_j^2$, or, equivalently,
\begin{align*}
\frac{\varepsilon d(p_{n+1},q_{n+1})}{C_{n+1}(1+d(p_{n+1},q_{n+1}))}&\min\bigl\{1,\dfrac{d(a_j,b_j)}{C_n\alpha_j'}\bigr\}\\\leq\varepsilon&\min\bigl\{1,\frac{d(c_j,d_j)}{C_{n+1}(\alpha_{m+j}+\beta_k)}\bigr\}.
\end{align*}
This inequality clearly holds if the latter minimum is $1$. We are left with proving that
\begin{align*}
\frac{d(p_{n+1},q_{n+1})}{1+d(p_{n+1},q_{n+1})} \min\bigl\{1,\dfrac{d(a_j,b_j)}{C_n\alpha_j'}\bigr\}&\leq \frac{d(c_j,d_j)}{\alpha_{m+j}+\beta_k}\\
&=\dfrac{1}{\frac{\alpha_j'}{d(a_j,b_j)}+\frac{\lambda_{n+1}}{d(p_{n+1},q_{n+1})}}.
\end{align*}
If $1\geq \frac{d(a_j,b_j)}{C_n\alpha_j'}$, then 
\begin{align*}
\dfrac{\alpha_j'}{d(a_j,b_j)}+\dfrac{\lambda_{n+1}}{d(p_{n+1},q_{n+1})}\leq \bigl(1+\dfrac{1}{d(p_{n+1},q_{n+1})}\bigr) \dfrac{C_n\alpha_j'}{d(a_j,b_j)}.
\end{align*}
If $1\leq \frac{d(a_j,b_j)}{C_n\alpha_j'}$, then 
\begin{align*}
\dfrac{\alpha_j'}{d(a_j,b_j)}+\dfrac{\lambda_{n+1}}{d(p_{n+1},q_{n+1})}\leq \dfrac{1}{C_n}+\dfrac{1}{d(p_{n+1},q_{n+1})}\leq 1+\dfrac{1}{d(p_{n+1},q_{n+1})}.
\end{align*}

It remains to show that
\begin{align*}
    \sum_{j=1}^{3m} \alpha_j+\sum_{k=1}^l\beta_k\leq \sum_{i=1}^{n+1}\lambda_i +\varepsilon.
\end{align*}
First, notice that
\begin{align*}
\sum_{j=1}^{3m} \alpha_j&=\sum_{j=1}^m \alpha_j'\bigl(\frac{d(a_j,c_j)}{d(a_j,b_j)}+\frac{d(c_j,d_j)}{d(a_j,b_j)}+\frac{d(d_j,b_j)}{d(a_j,b_j)}\bigr)\\
&=\sum_{j=1}^m \alpha_j'\frac{d(a_j,c_j)+d(c_j,d_j)+d(d_j,b_j)}{d(a_j,b_j)}\\
&\leq \sum_{j=1}^m \alpha_j'\frac{L(\gamma_{a_j,b_j})}{d(a_j,b_j)}< \sum_{j=1}^m \alpha_j'\frac{d(a_j,b_j)+\varepsilon'\delta_j}{d(a_j,b_j)}\\
&=\sum_{j=1}^m \alpha_j'+\varepsilon'\sum_{j=1}^m \frac{\alpha_j'\delta_j}{d(a_j,b_j)}\\
&\leq \sum_{j=1}^m\alpha_j' + \varepsilon' m
< \sum_{i=1}^n\lambda_i + (C_n+1)\varepsilon'\\
&< \sum_{i=1}^n\lambda_i + \frac{C_n+1}{C_{n+1}}\varepsilon
< \sum_{i=1}^n\lambda_i + \frac{\varepsilon}{2}
\end{align*}
and
\begin{align*}
\sum_{k=1}^l \beta_k&=\lambda_{n+1}\sum_{k=1}^l \frac{d(\gamma(t_{k-1}),\gamma(t_{k}))}{d(p_{n+1},q_{n+1})}\leq\frac{\lambda_{n+1}L(\gamma)}{d(p_{n+1},q_{n+1})}\\
&<\lambda_{n+1}+\frac{\lambda_{n+1}\varepsilon\delta}{d(p_{n+1},q_{n+1})}\leq \lambda_{n+1}+\frac{\varepsilon}{2}.
\end{align*}
Therefore
\begin{align*}
    \sum_{j=1}^{3m} \alpha_j+\sum_{k=1}^l\beta_k< \sum_{i=1}^{n+1}\lambda_i+\varepsilon.
\end{align*}
\end{proof}

\section{Locally almost square Lipschitz-free spaces}
The following theorem characterises locally almost square Lipschitz-free spaces.
\begin{thm}\label{thm: length is LASQ}
Let $M$ be a complete metric space. The Lipschitz-free space $\mathcal F(M)$ is LASQ if and only if $M$ is length.
\end{thm}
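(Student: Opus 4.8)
The plan is to prove the two implications separately, using Theorem~\ref{thm: M length iff F(M) DP} for the easy direction and Lemma~\ref{lem: main} for the hard one. The implication from $\mathcal F(M)$ being LASQ to $M$ being length is immediate: as recalled in the introduction, every LASQ Banach space has the slice diameter $2$ property \cite{MR3235311}, and since $M$ is complete, Theorem~\ref{thm: M length iff F(M) DP} tells us that the slice diameter $2$ property (item~(6)) is equivalent to $M$ being length (item~(1)). Hence if $\mathcal F(M)$ is LASQ it has the slice diameter $2$ property, and Theorem~\ref{thm: M length iff F(M) DP} forces $M$ to be length. No further work is needed here.

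For the converse, assume $M$ is length; I must produce, for each $x\in S_{\mathcal F(M)}$ and each $\varepsilon>0$, some $y\in S_{\mathcal F(M)}$ with $\|x\pm y\|\le 1+\varepsilon$. First I would reduce to a convenient finitely supported element. Since finite linear combinations of elementary molecules are dense, and since the free-space norm on a finitely supported element equals the infimum of $\sum_i\lambda_i$ over all its representations $\sum_i\lambda_i\,m_{p_i,q_i}$ with $\lambda_i>0$, I may replace $x$, up to an error $<\varepsilon$, by $x'=\sum_{i=1}^n\lambda_i\,m_{p_i,q_i}$ with $\lambda_i>0$ and $\sum_i\lambda_i\le 1$ (a slight scaling guarantees this constraint). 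Applying Lemma~\ref{lem: main} to $x'$ then yields a representation $\sum_{j=1}^m\alpha_j\,m_{a_j,b_j}$, close to $x'$ in norm, with $\sum_j\alpha_j$ close to $1$ and, crucially, with simple near-geodesic paths $\gamma_{a_j,b_j}$ (property~(d)) whose images are pairwise disjoint or meet only at endpoints (property~(c)).

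The heart of the argument is to build $y$ from this geometric picture, inspired by the classical fact that $\mathcal F([0,1])=L_1([0,1])$ is LASQ through rapidly oscillating signs. I would subdivide each path $\gamma_{a_j,b_j}$ into a large even number $2N$ of consecutive arcs of equal length, with division points $a_j=c_{j,0},c_{j,1},\dots,c_{j,2N}=b_j$, put
\[
\tilde y=\sum_{j=1}^m\sum_{i=0}^{2N-1}(-1)^i\,\alpha_j\,\frac{d(c_{j,i},c_{j,i+1})}{d(a_j,b_j)}\,m_{c_{j,i},c_{j,i+1}},
\]
and set $y=\tilde y/\|\tilde y\|$; thus $y$ carries the same \emph{density} as $x$ along the paths, but with alternating signs. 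Two estimates then have to be checked. For the norm of $\tilde y$, the exact telescoping identity $d(a_j,b_j)\,m_{a_j,b_j}=\sum_i d(c_{j,i},c_{j,i+1})\,m_{c_{j,i},c_{j,i+1}}$ together with the triangle inequality bounds $\|\tilde y\|$ above by $\sum_j\alpha_j\,L(\gamma_{a_j,b_j})/d(a_j,b_j)$, which is close to $\sum_j\alpha_j\approx 1$ by~(d); the matching lower bound comes from testing $\tilde y$ against a $1$-Lipschitz zigzag (tent) function of small amplitude whose slope is $\pm 1$ matching the signs along the arcs, extended to all of $M$ by the McShane formula, where the disjointness~(c) and the near-geodesic property~(d) are exactly what keep this function $1$-Lipschitz. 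For $x\pm y$, substituting the telescoping decomposition of each $m_{a_j,b_j}$ leaves the even-indexed arcs surviving in $x+y$ and the odd-indexed arcs in $x-y$; since $\|\tilde y\|\approx 1$ the surviving coefficients are close to $2$ on half of the arcs, whose total length is $\approx\tfrac12 d(a_j,b_j)$, so the triangle inequality gives $\|x\pm y\|\lesssim\sum_j\alpha_j\approx 1$. Collecting the approximations and passing from $x'$ back to $x$ yields $\|x\pm y\|\le 1+\varepsilon$.

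The main obstacle is precisely this construction of a single $y$ that simultaneously controls both $\|x+y\|$ and $\|x-y\|$: one cannot concentrate $y$ on a single path, since an adversarial norming functional of $x$ would then also norm $y$, so $y$ must mimic the full density of $x$, and the alternating signs must be reconcilable with a genuine $1$-Lipschitz test function. This is exactly where the disjoint near-geodesic structure supplied by Lemma~\ref{lem: main} is indispensable. The remaining work is the careful $\varepsilon$-bookkeeping through the normalisation by $\|\tilde y\|$ and through the reduction from $x$ to $x'$.
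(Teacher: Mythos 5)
Your overall strategy coincides with the paper's: the forward implication via the slice diameter $2$ property and Theorem~\ref{thm: M length iff F(M) DP}, and the converse by feeding a finitely supported approximant of $x$ into Lemma~\ref{lem: main} and then building $y$ as an alternating-sign oscillation along the disjoint near-geodesic paths, normed from below by a small-amplitude zigzag Lipschitz function extended by McShane. (Your explicit reduction to $x'=\sum_i\lambda_i\,m_{p_i,q_i}$ with $\sum_i\lambda_i\le 1$ is a step the paper leaves implicit, and your direct triangle-inequality bound on the surviving even/odd arcs of $x'\pm\tilde y$ is an acceptable variant of the paper's duality estimate for $\|m_{a_j,b_j}\pm y_j\|$.)

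The one step that does not work as literally described is the subdivision into $2N$ arcs of equal \emph{arclength} combined with a zigzag of one small amplitude. For the lower bound $g(y_j)\approx 1$ the zigzag must have metric slope $\pm 1$ on each arc, i.e.\ its amplitude must match the chord length $d(c_{j,i},c_{j,i+1})$ of every arc. With equal arclength you only know that each chord is at most $L(\gamma_{a_j,b_j})/(2N)$ and at least that quantity minus roughly $\varepsilon\delta_j$, so taking the amplitude to be the minimal chord costs a relative error of order $N\varepsilon\delta_j/d(a_j,b_j)$ in $g(y_j)$. Since $N$ must be large enough that the amplitude falls below the separation radius $r$ of the truncated paths (to keep $g$ globally $1$-Lipschitz across different paths), and $r$ is only determined after Lemma~\ref{lem: main} has produced the paths (hence after $\varepsilon$ has been spent), this error is not under control. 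The paper sidesteps this by choosing the division points at exact chord distance $s$ from one another, marching along the path via $t_{k+1}=\max\{t\colon d(\gamma_{a_1,b_1}(t),u_k)=s\}$, with a separate treatment of two end arcs of radius $R$ and one compensating middle point; this makes $g(y_j)=1$ exactly and gives the clean identity $2\sum_{k}d(u_{2k-2},u_{2k-1})=\sum_{k}d(u_{k-1},u_k)\le L(\gamma_{a_1,b_1})$ used in the upper bound. Replacing your equal-arclength subdivision by this equal-chord-step subdivision repairs the argument; everything else in your outline matches the paper.
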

\begin{proof} If $\mathcal F(M)$ is LASQ, then $\mathcal F(M)$ has the slice diameter $2$ property, and therefore $M$ is length by Theorem~\ref{thm: M length iff F(M) DP}.

Assume now that $M$ is a length space. Let $x\in S_{\mathcal F(M)}$ and let $\varepsilon>0$. By Lemma~\ref{lem: main}, there exist $m\in\mathbb N$, $\alpha_1, \ldots, \alpha_m>0$, $a_1,\ldots, a_m, b_1, \ldots, b_m\in M$, and simple paths $\gamma_{a_1,b_1},\ldots, \gamma_{a_m, b_m}$ such that
\begin{itemize}
    \item[(a)] $\|x - \sum_{j=1}^m \alpha_j\, m_{a_j, b_j}\|< \varepsilon/5$;
    \item[(b)] $\sum_{j=1}^m \alpha_j < 1+\varepsilon/5$;
    \item[(c)] images of paths $\gamma_{a_1, b_1}, \ldots, \gamma_{a_m, b_m}$ are pairwise disjoint or intersect only at endpoints;
    \item[(d)] for every $j\in \{1,\ldots, m\}$, we have $L(\gamma_{a_j, b_j}) < d(a_j, b_j)+ \varepsilon\delta_j/5$, where $\delta_j =\frac{d(a_j, b_j)}{m \alpha_j}$.
\end{itemize}
It suffices to find $y_1,\dotsc,y_m\in\mathcal F(M)$ and a Lipschitz function $g\in S_{\Lip_0(M)}$ (we fix $0\in M$ below) such that, for every $j\in \{1,\dotsc, m\}$, one has $g(y_j)=1$ and
\[
\|m_{a_j, b_j}\pm y_j\| \leq 1+\frac{\varepsilon}{5m\alpha_j}.
\]
Indeed, assume that such $y_1,\dotsc, y_m$ and $g$ exist and let $y = \sum_{j=1}^m \alpha_j y_j$. Then
\[
\|y\|\geq g\bigl(\sum_{j=1}^m \alpha_j y_j\bigr)=\sum_{j=1}^m \alpha_j g(y_j)=\sum_{j=1}^m \alpha_j> 1-\frac{\varepsilon}{5}
\]
and
\begin{align*}
\|y\| & \leq \sum_{j=1}^m\alpha_j \|{y_j}\|=
\sum_{j=1}^m\frac{\alpha_j}{2}\|m_{a_j,b_j}+y_j-m_{a_j,b_j}+y_j\|\\&\leq \sum_{j=1}^m\frac{\alpha_j}{2}\bigl(\|m_{a_j,b_j}+y_j\|+\|m_{a_j,b_j}-y_j\|\bigr)\\&\leq \sum_{j=1}^m\alpha_j\bigl(1+\frac{\varepsilon}{5m\alpha_j}\bigr)< 1+\frac{2\varepsilon}{5},
\end{align*}
and therefore
\begin{align*}
\bigl\|x\pm\frac{y}{\|y\|}\bigr\|&\leq \bigl\|x-\sum_{j=1}^m\alpha_j\, m_{a_j,b_j}\bigr\|+\bigl\|\sum_{j=1}^m\alpha_j\, m_{a_j,b_j}\pm y\bigr\|+\bigl|\|y\|-1 \bigr|\\
&< \frac{\varepsilon}{5}+\sum_{j=1}^m\alpha_j \|m_{a_j,b_j}\pm y_j\|+\frac{2\varepsilon}{5}\leq 1+\varepsilon.
\end{align*}

For every $j\in\{1,\dotsc, m\}$, we will define $K\in\mathbb N$ and a finite subset $\{u_0,\dotsc,u_{2K}\}$ of $\operatorname{Im}\gamma_{a_j,b_j}$, then we define $y_j$ as a linear combination $\sum_{k=1}^{2K} \lambda_k \,m_{u_k,u_{k-1}}$ with specific $\lambda_1,\dotsc,\lambda_{2K}\in \mathbb{R}$, and finally, we define $g$ on $\{u_0,\dotsc,u_{2K}\}$. 

Let 
\[A=\{a_1,\dotsc,a_m,b_1,\dotsc,b_m\}.\]
Choose $R>0$ such that
\begin{enumerate}
    \item for all $p,q\in A$ with $p\neq q$, one has $d(p,q)>2R$;
    \item for every $p\in A$ and every $j\in\{1,\dotsc,m\}$ with $p\neq a_j$ and $p\neq b_j$, one has
$B(p,R)\cap \operatorname{Im}\gamma_{a_j,b_j}=\varnothing$,
\end{enumerate}
and let $B=\bigcup_{p\in A}B(p,R)$.

The sets $\operatorname{Im} \gamma_{a_1,b_1}{\setminus} B, \dotsc, \operatorname{Im} \gamma_{a_m,b_m}{\setminus} B$ are pairwise disjoint and compact, therefore there exists $r\in(0,R)$ such that, for all $i,j\in \{1,\dotsc, m\}$ with $i\neq j$, one has
\[
d(\operatorname{Im} \gamma_{a_i,b_i}{\setminus} B , \operatorname{Im} \gamma_{a_j,b_j}{\setminus} B) >2r.
\]

We start by defining a finite subset of $\operatorname{Im} \gamma_{a_1,b_1}$.
Let $K\in\mathbb N$ be such that
\[
2(K-2)r> d(a_1,b_1)-2R
\]
and let
\[
s = \frac{d(a_1,b_1)-2R}{2(K-2)}.
\]
Then $0<s<r$.

We will find a special partition of the interval $[0,1]$ and the corresponding points in $\operatorname{Im} \gamma_{a_1,b_1}$. 
Let $t_0=0$ and $t_{2K}=1$, and let $u_0 = a_1$ and $u_{2K}=b_1$. Let 
\[
t_1=\max\bigl\{t\colon d(\gamma_{a_1,b_1}(t),a_1)=R\bigr\}
\]
and
\[
t_{2K-1} = \min\bigl\{t \colon d(\gamma_{a_1, b_1}(t), b_1)=R\bigr\},
\]
and let $u_1=\gamma_{a_1, b_1}(t_1)$ and $u_{2K-1}=\gamma_{a_1, b_1}(t_{2K-1})$. Assume that we have found some $t_k$ and $u_k$, where $1\leq k\leq 2K-4$. Let
\[
t_{k+1} = \max\bigl\{t \colon d(\gamma_{a_1,b_1}(t),u_{k})=s\bigr\}
\]
and let $u_{k+1} = \gamma_{a_1, b_1}(t_{k+1})$. 

Note that $t_{2K-3}\leq t_{2K-1}$. Let $t_{2K-2}$ be such that $t_{2K-3}\leq t_{2K-2}\leq t_{2K-1}$ and 
\[
d\bigl(u_{2K-3}, \gamma_{a_1,b_1}(t_{2K-2})\bigr)=d\bigl(u_{2K-1}, \gamma_{a_1,b_1}(t_{2K-2})\bigr),
\]
and let $u_{2K-2}=\gamma_{a_1,b_1}(t_{2K-2})$. Then $0 = t_0\leq\dotsc \leq t_{2K} =1$ and the obtained set $\{u_0,\dotsc, u_{2K}\}$ is a finite subset of $\operatorname{Im}\gamma_{a_1,b_1}$, where $u_k = \gamma_{a_1,b_1}(t_k)$. In order to make $M$ into a pointed metric space, set $0=u_1$.

Define
\begin{align*}
y_1&=\sum_{k=1}^{K} \bigl(\frac{d(u_{2k-2},u_{2k-1})}{d(a_1,b_1)}\,m_{u_{2k-2},u_{2k-1}}-\frac{d(u_{2k-1},u_{2k})}{d(a_1,b_1)}\,m_{u_{2k-1},u_{2k}}\bigr).
\end{align*}
Let us show that
\[
\|m_{a_1,b_1}+ y_1\|\leq 1+\frac{\varepsilon}{{5m\alpha_1}}.
\]
For every Lipschitz function $f\in S_{\Lip_0(M)}$, we have
\begin{align*}
&d(a_1,b_1)\bigl|f( m_{a_1,b_1} + y_1)\bigr|\\
&\phantom{\;\;\;}=\bigl|f(u_0)-f(u_{2K}) + \sum_{k=1}^{K} \bigl(f(u_{2k-2})-f(u_{2k-1}) -f(u_{2k-1})+f(u_{2k}) \bigr)\bigr|\\
&\phantom{\;\;\;}= 2\,\bigl|\sum_{k=1}^{K} \bigl(f(u_{2k-2})-f(u_{2k-1}) \bigr)\bigr|
\leq 2\sum_{k=1}^{K}d(u_{2k-2},u_{2k-1})\\
&\phantom{\;\;\;}=\sum_{k=1}^{2K}d(u_{k-1}, u_{k})
\leq L(\gamma_{a_1,b_1})\leq d(a_1,b_1)+\frac{\varepsilon\delta_1}{5}.
\end{align*}
Therefore
\[
\| m_{a_1,b_1} + y_1\|\leq 1+\frac{\varepsilon}{5m\alpha_1}.
\]
Analogously one can show that
\[
\|m_{a_1,b_1}- y_1\|\leq 1+\frac{\varepsilon}{{5m\alpha_1}}.
\]
Define the function $g$ on $\{u_0,\dotsc,u_{2K}\}$ by
\[
g(p)=\begin{cases}
R&\text{ if $p=u_0$ or $p=u_{2K}$};\\
0&\text{ if $p=u_k$, where $k$ is odd or $p=u_{2K-2}$;}\\
s&\text{ in other cases.}
\end{cases}
\]
Note that the Lipschitz constant of $g$ on the set $\{u_0,\dotsc, u_{2K}\}$ is equal to $1$ and that
\begin{align*}
    g(y_1)&=\frac{1}{d(a_1,b_1)}\sum_{k=1}^{K} \big(g(u_{2k-2})-g(u_{2k-1})-g(u_{2k-1})+g(u_{2k})\big)\\
        &=\frac{1}{d(a_1,b_1)}(2R+2(K-2)s)=1.
\end{align*}

For every $i\in \{2,\dotsc, m\}$, we analogously define a finite subset of $\operatorname{Im}\gamma_{a_i,b_i}$, an $y_i\in \mathcal{F}(M)$, and extend the definition of $g$ onto the finite subset. Let $C$ be the set of all points where $g$ is defined after this process. The function $g$ is correctly defined on the set $C$ because the images of paths $\gamma_{a_1,b_1},\dotsc,\gamma_{a_m,b_m}$ are pairwise disjoint or intersect only at the endpoints, where the values of $g$ are equal to $R$. By McShane's extension theorem, extend $g$ to the whole of $M$ while preserving the Lipschitz constant. 

It remains to verify that the Lipschitz constant of $g|_C$ is equal to $1$. Let $p,q\in C$ with $p\neq q$. If $p\in B\cap C$ or $q\in B\cap C$, then $d(p,q)\geq R$ because $B\cap C = A$, and since $0\leq g(p)\leq R$ and $0\leq g(q)\leq R$, it follows that
\[
|g(p)-g(q)|\leq R\leq d(p,q). 
\]

Assume now that $p,q\in C{\setminus} B$. Then there are uniquely determined $i,j\in \{1,\dotsc, m\}$ such that $p\in \operatorname{Im}\gamma_{a_i, b_i}{\setminus} B$ and $q\in \operatorname{Im}\gamma_{a_j, b_j}{\setminus} B$. If $i = j$, then we are done. Otherwise $d(p,q)>2r$ by our choice of $r$, and since $0\leq g(p)< r$ and $0\leq g(q)< r$, it follows that
\[
|g(p)-g(q)|< r < d(p,q).
\]
\end{proof}

\section{Lipschitz-free space is never almost square}

According to \cite{MR4183387}, a Banach space $X$ is \emph{$s$-almost square} (briefly, \emph{s-ASQ}) for $s\in (0, 1]$ 
if, for every $\varepsilon>0$ and every finite subset $\{x_1,\dotsc,x_n\}$ of $S_X$, there exists $y\in S_X$ such that \[\|x_i\pm sy\|\leq 1+\varepsilon\] holds for every $i\in\{1,\dotsc,n\}$.
Note that 1-ASQ means precisely ASQ. By \cite[Propositions~1.6 and 1.7]{MR4183387}, every slice of the unit ball of an $s$-ASQ Banach space has diameter equal to $2s$, and therefore the unit ball of an $s$-ASQ Banach space cannot contain strongly exposed points. Thus, by Theorem~\ref{thm: M length iff F(M) DP}, a complete metric space $M$ has to be length whenever the Lipschitz-free space $\mathcal{F}(M)$ is $s$-ASQ.

Our main result in this section is the following.

\begin{thm}\label{thm: F(M) never ASQ}
Lipschitz-free space is never ASQ. In fact, Lipschitz-free space is not $s$-ASQ for any $0<s\leq 1$.
\end{thm}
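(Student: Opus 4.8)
The plan is to reduce the statement to a single model computation, patterned on the fact that $\mathcal F([0,1])=L_1[0,1]$ is not ASQ, and then to lift that computation to an arbitrary length space through the norming functionals of elementary molecules. Throughout I would exploit that the molecules and the candidate $y$ are positively homogeneous in $s$, so that a single bad $\varepsilon$ will defeat $s$-ASQ for all $s\in(0,1]$ at once.

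First, the easy reductions. Since $\mathcal F(M)=\mathcal F(\overline M)$ isometrically and being $s$-ASQ is an isometric invariant, I may assume $M$ is complete. If $M$ is not length, then, as noted just before the statement, the unit ball of an $s$-ASQ space has no strongly exposed points, so $\mathcal F(M)$ cannot be $s$-ASQ by the equivalence $(1)\Leftrightarrow(7)$ in Theorem~\ref{thm: M length iff F(M) DP}. It therefore remains to treat the case when $M$ is length, and here I would defeat the $s$-ASQ condition with a fixed two-element subset of the sphere and one explicit $\varepsilon$.

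The motivation comes from $L_1$: under $\mathcal F([0,1])=L_1[0,1]$, two molecules on adjacent segments correspond to normalized indicators of disjoint intervals, and for disjoint-support $x_1,x_2\in S_{L_1}$ the identity $|x_i+sy|+|x_i-sy|=2\max(|x_i|,s|y|)$ yields $\int(s|y|-|x_i|)^+\le\varepsilon$ whenever $\|x_i\pm sy\|\le1+\varepsilon$; summing over the complementary supports forces $s\|y\|\le2\varepsilon$, impossible once $\varepsilon<s/2$. So in a length space I would choose a near-geodesic through points $p,w,q$ with $w$ metrically between $p$ and $q$ and set $x_1=m_{w,p}$, $x_2=m_{q,w}$, two molecules whose corridors are disjoint, arranging the disjointness with the path-splitting technology of Lemma~\ref{lem: main}. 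The general substitute for the $L_1$ max-identity is a lower bound through norming functionals: for a molecule $x=m_{a,b}$ with $d(a,b)=1$ and any $z$, testing against $1$-Lipschitz $\phi$ with $\phi(a)-\phi(b)=1$ gives $\|x+z\|\ge1+\mu^+(z)$ and $\|x-z\|\ge1+\mu^-(z)$, where $\mu^+(z)=\sup_\phi\phi(z)$ and $\mu^-(z)=\sup_\phi(-\phi(z))$. Hence $\|x\pm sy\|\le1+\varepsilon$ forces $\mu^+(sy)+\mu^-(sy)\le2\varepsilon$. This quantity $\mu^++\mu^-$ plays the role of $2\int(s|y|-|x|)^+$: it equals $\sup(\phi-\psi)(z)$ over pairs of norming functionals, i.e.\ a supremum over $2$-Lipschitz functions constant on $\{a,b\}$, and it measures the mass of $z$ transverse to the corridor of $x$. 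Granting the superadditivity $\bigl(\mu_1^++\mu_1^-\bigr)(y)+\bigl(\mu_2^++\mu_2^-\bigr)(y)\ge\|y\|$ for the two disjoint corridors, homogeneity gives $s\|y\|\le4\varepsilon$, contradicting $\|y\|=1$ once $\varepsilon<s/4$.

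The main obstacle I anticipate is exactly this superadditivity in a genuine length space, where elements of $\mathcal F(M)$ have no honest supports and the corridors are only near-geodesics that may meet the rest of the space. I expect to resolve it by building, for a hypothetical good $y$, explicit competitor functions in each norming family out of the geodesic coordinate along $p$–$w$–$q$ together with McShane extensions that vanish off the respective corridor, precisely as $g$ was assembled in the proof of Theorem~\ref{thm: length is LASQ}; the disjointness of the corridors and the smallness of the excess $L(\gamma)-d(a,b)$ should then let the two transverse measurements recombine to recover the full norm of $y$ up to a controlled error.
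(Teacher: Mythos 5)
Your reductions (to complete $M$, and then to the length case via the absence of strongly exposed points) are fine, and the inequality you extract from norming functionals is correct: writing $D(x_i)$ for the set of $1$-Lipschitz functions norming the molecule $x_i$, the bounds $\|x_i+sy\|\ge 1+\phi(sy)$ and $\|x_i-sy\|\ge 1-\psi(sy)$ for $\phi,\psi\in D(x_i)$ do give $\sup_{\phi,\psi\in D(x_i)}(\phi-\psi)(sy)\le 2\varepsilon$. But the proof stops exactly where the theorem begins: the ``superadditivity'' $(\mu_1^++\mu_1^-)(y)+(\mu_2^++\mu_2^-)(y)\ge\|y\|$ is the entire content of the statement in the length case, and you only announce that you ``expect to resolve it''. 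This is a genuine gap, not a deferred verification. Each $\mu_i^++\mu_i^-$ is a supremum of sums over pairs of norming functionals; in a general length space an arbitrary $y\in S_{\mathcal F(M)}$ carries no decomposition into a part ``supported on the corridor of $x_i$'' and a transverse remainder on which such suprema would add up, and the $L_1$ identity $|a+b|+|a-b|=2\max(|a|,|b|)$, which is what actually powers your model computation, has no analogue in $\mathcal F(M)$. The families $D(x_1)$ and $D(x_2)$ are rigid only on the bundles of near-geodesics from $p$ to $w$ and from $w$ to $q$; in a length space that is not geodesic these bundles are only approximately defined and can overlap in uncontrolled ways near $w$. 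Nothing in Lemma~\ref{lem: main} or in the construction of $g$ in Theorem~\ref{thm: length is LASQ} supplies the needed decomposition: those arguments split a \emph{given, fixed} finite combination of molecules, whereas here you must beat \emph{every} $y$ in the sphere with the same two test vectors.

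The contrast with the paper's actual argument shows why your two macroscopic molecules are risky. Proposition~\ref{prop: F(M) never 2-ASQ} uses, for each $\varepsilon$, about $8/\varepsilon$ molecules $m_{p_i,q_i}$ with $q_i$ \emph{extremely close} to $p_i$, placed at well-separated points along a rectifiable path; for $y=\sum_{j=1}^m\frac1m m_{u_j,v_j}$ a pigeonhole argument finds a small ball $B(p_i,r)$ meeting few of the $u_j,v_j$, and the norming functional of $y$ is modified only inside that ball so as to also norm $m_{p_i,q_i}$, yielding $\max_i\|m_{p_i,q_i}+y\|>1+\|y\|-\varepsilon$. There the number of test vectors necessarily grows as $s\to 0$, whereas your scheme would defeat $s$-ASQ for every $s$ with one fixed pair --- a substantially stronger claim for which you give no argument and for which I see no easy one in an arbitrary length space. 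As it stands the proposal is a plausible programme, not a proof.
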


This theorem is a straightforward consequence of the following proposition.

\begin{prop}\label{prop: F(M) never 2-ASQ}
Let $M$ be a length space. For every $\varepsilon>0$, there exist $n\in\mathbb N$ and $p_1,q_1,\dotsc, p_n, q_n\in M$ such that, for every $y \in B_{\mathcal{F}(M)}$, 
\begin{equation}\label{eq: s-asq}
\max_{i\in \{1,\dotsc,n\}}\|m_{p_i, q_i}+y\|>1+\|y\|-\varepsilon.    
\end{equation}
\end{prop}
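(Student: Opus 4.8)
The plan is to fix two distinct points $a,b\in M$, take a near-geodesic simple path $\gamma\colon[0,D]\to M$ from $a$ to $b$ (available since $M$ is length; cf.\ the construction in Lemma~\ref{lem: main}) parametrised so that $L(\gamma)\approx D=d(a,b)$, and to use as molecules the consecutive ones $m_k\coloneqq m_{x_{k-1},x_k}$, $k=1,\dots,N$, where $x_k=\gamma(kD/N)$ and $N$ is large, to be chosen from $\varepsilon$. Writing $\mathrm{def}_k\coloneqq(1+\|y\|)-\|m_k+y\|$, which is $\ge 0$ by the triangle inequality, the inequality \eqref{eq: s-asq} is precisely the assertion $\min_k\mathrm{def}_k<\varepsilon$. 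So it suffices to prove an \emph{averaging bound} $\sum_{k=1}^N\mathrm{def}_k\le C\|y\|$ with $C$ independent of $N$ and of $y\in B_{\mathcal F(M)}$; then $\min_k\mathrm{def}_k\le C\|y\|/N\le C/N<\varepsilon$ as soon as $N>C/\varepsilon$. This reduction drives the whole argument and already explains why finitely many molecules can defeat $s$-ASQ for all $y$ at once: no single test functional can be steep along all of $\gamma$ simultaneously, but the \emph{cost} of forcing steepness on one segment is local, and these local costs add up to at most a constant times $\|y\|$.

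To obtain the bound I would dualise. Given $\eta>0$, fix $g\in\Lip_0(M)$ with $\|g\|\le1$ and $g(y)\ge\|y\|-\eta$. For each $k$ I would build $f_k\in\Lip_0(M)$ with $\|f_k\|\le1$ that (i) has full slope on the $k$-th segment, $f_k(x_k)-f_k(x_{k-1})=d(x_{k-1},x_k)$, so that $f_k(m_k)=1$, and (ii) coincides with $g$ outside a tube $U_k$ around the $k$-th segment, so that $(g-f_k)(y)$ sees only the part of $y$ supported near that segment. Testing $\|m_k+y\|\ge f_k(m_k)+f_k(y)$ yields $\mathrm{def}_k\le\bigl(\|y\|-g(y)\bigr)+\bigl(g(y)-f_k(y)\bigr)\le\eta+(g-f_k)(y)$, so $\sum_k\mathrm{def}_k\le N\eta+\sum_k(g-f_k)(y)$; since $\eta$ is arbitrary it suffices to bound $\sum_k(g-f_k)(y)\le C\|y\|$. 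Because each modification $g-f_k$ is supported in $U_k$, and the tubes $U_k$ have bounded overlap along $\gamma$, this sum is controlled by the total mass of $y$ lying near $\gamma$, hence by a constant multiple of $\|y\|$. The part of $y$ lying far from $\gamma$ contributes nothing, as there $f_k=g$; thus far-away mass is handled automatically (essentially by near-disjointness of supports), while mass near $\gamma$ is charged to the unique nearby segment.

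The hard part is step (ii): constructing, for each $k$, an honestly $1$-Lipschitz $f_k$ that attains full slope on the $k$-th segment yet reverts to $g$ just outside a bounded tube. The naive recipes---adding a localised ramp to $g$, or taking a pointwise maximum $\max\bigl(g,\,g(x_{k-1})+\Phi_k\bigr)$ with a cutoff ramp $\Phi_k$---either break the $1$-Lipschitz bound or fail to stay local, because forcing steepness on one segment tends to propagate a global shift of the potential downstream. The correct $f_k$ should follow the optimal ``direction field'' of $g$ away from the segment while overriding it to slope $+1$ along the segment, in the spirit of the sawtooth functions in the proof of Theorem~\ref{thm: length is LASQ}, and the local override cost at $x_k$ must be shown comparable to the ``wrong-signed'' mass of $y$ near $x_k$. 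I would first run the argument in the model case $\mathcal F([0,1])=L_1$, where $y$ has a density $G$, the molecule $m_k$ is the normalised indicator of the $k$-th interval $I_k$ of length $1/N$, and a direct computation gives $\mathrm{def}_k=2\int_{I_k}\min(G_-,N)$, whose sum over $k$ telescopes to $2\int_0^1\min(G_-,N)\le 2\|y\|$ (so $C=2$). I would then transport this to a general length space via near-geodesics and McShane extensions, having first reduced to finitely supported $y$ by density of $\operatorname{span}\{\delta_p\colon p\in M\}$ in $\mathcal F(M)$.
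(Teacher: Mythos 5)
Your reduction of \eqref{eq: s-asq} to the averaging bound $\sum_{k}\mathrm{def}_k\le C\|y\|$ is sound (and is morally the same as a pigeonhole), and your $L_1$ computation is correct. But the argument has a genuine gap exactly where you flag it: step (ii), the construction of a $1$-Lipschitz $f_k$ with full slope on the $k$-th segment that reverts to $g$ outside a tube of bounded overlap. With consecutive segment molecules $m_{x_{k-1},x_k}$ this runs into two concrete obstructions. First, if $g$ decreases by $d(x_{k-1},x_k)\approx D/N$ along the $k$-th segment, then $f_k(x_k)-g(x_k)\approx 2D/N$, and for $z\notin U_k$ one gets $|f_k(x_k)-f_k(z)|\ge 2D/N-d(x_k,z)$, so keeping $\|f_k\|\le 1+\eta$ forces the tube radius to be of order $D/(N\eta)$; the localization scale is then tied to $\eta$, not freely disposable. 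Second, and more seriously, prescribing slope $+1$ \emph{along the whole segment} of a path that is only a near-geodesic is itself not $1$-Lipschitz: arclength dominates distance, and the additive excess $L(\gamma|_{[s,t]})-d(\gamma(s),\gamma(t))$ does not control the ratio for nearby $s,t$. The $L_1$ model is misleading here precisely because in $\mathcal F([0,1])$ the pairing factors through the derivative, so localizing in $f'$ is free, whereas in a general length space changing the slope on one segment shifts the potential downstream, and that shift is exactly what costs Lipschitz constant. You identified this obstruction yourself but did not overcome it, and "follow the optimal direction field of $g$" is not a construction.

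The missing idea is a two-scale choice of molecules. The paper does not use consecutive segments of a geodesic: it places points $p_1,\dotsc,p_n$ at mutual distance $2r$ along a rectifiable path (with $r=d(p,q)/(2n)$) and takes $q_i\in B(p_i,\theta r)\setminus\{p_i\}$, so each test molecule $m_{p_i,q_i}$ is \emph{tiny} compared with the separation of the sites. For a norming functional $f$ of $y$ it then suffices to prescribe two values, $g(q_i)=f(q_i)$ and $g(p_i)=f(q_i)+d(p_i,q_i)$, keep $g=f$ off $B(p_i,r)$, and extend by McShane; the estimate $\|g\|\le 1+2\theta$ is a one-line computation precisely because $d(p_i,q_i)\le\theta r\ll r$, and $g(m_{p_i,q_i})=1$ while $(f-g)(y)$ only sees the part of $y$ with an endpoint in $B(p_i,r)$. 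A pigeonhole over the $n$ disjoint balls finds an $i$ for which that part has mass at most $2/n$. If you replace your segment molecules by such short, well-separated ones, your averaging scheme closes; as written, the central construction is absent.
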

\begin{proof}
Let $\varepsilon>0$. Let $n\in \mathbb{N}$ be such that $8/n\leq \varepsilon$ and let $\theta>0$ be such that $8\theta\leq \varepsilon$. Let $p,q\in M$ with $p\neq q$,  let $r = d(p,q)/(2n)$, and let $\gamma$ be a rectifiable path from $p$ to $q$. We define points $p_1,\dotsc, p_n\in M$ inductively. Set $p_1=p$ and, if we have fixed $p_i$ for some $i\in \{1,\dotsc, n-1\}$, then we define $p_{i+1} = \gamma(\max A_i)$, where
\[
A_i = \{t\in [0,1] \colon d(\gamma(t), p_i) = d(p,q)/n\}.
\]
Let $q_1,\dotsc, q_n \in M$ be such that $q_i \in B(p_i, \theta r){\setminus}\{p_i\}$ for every $i\in\{1,\dotsc,n\}$. 

Let $y\in B_{\mathcal{F}(M)}$ and then let $f\in S_{\Lip_0(M)}$ be such that $f(y)=\|y\|$. In order to show that the inequality (\ref{eq: s-asq}) holds, it suffices to consider only the case $y = \sum_{j=1}^m \frac{1}{m}\, m_{u_j, v_j}$.
For every $i\in\{1,\dotsc,n\}$, set $B_i = B(p_i, r)$. Then $B_1,\dotsc,B_n$ are pairwise disjoint. 

Fix $i\in \{1,\dotsc, n\}$ such that the set
\[
J = \{j\in \{1,\dotsc,m\}\colon u_j\in B_i \text{ or } v_j\in B_i\}
\]
has at most $2m/n$ elements. Define $g\in \Lip_0(M)$ by setting $g|_{M{\setminus} B_i}=f|_{M{\setminus} B_i}$, $g(q_i)=f(q_i)$, $g(p_i)=f(q_i)+d(p_i,q_i)$, and, by McShane's extension theorem, extending $g$ to be defined in the whole of $M$ while preserving the Lipschitz constant.
Then $\|g\|\leq 1+2\theta$ because, for every $a\not\in B_i$, one has
\begin{align*}
    |g(a)-g(p_i)|&=|f(a)-f(q_i)+d(p_i,q_i)|\leq d(a,q_i)+d(p_i,q_i)\\
    &\leq d(a,p_i)+2d(p_i,q_i)\leq  d(a,p_i)+2\theta r\leq (1+2\theta)d(a,p_i).
\end{align*}
Note that
\begin{align*}
(f-g)(y) &= (f-g)\bigl(\sum_{j\in J}\frac{1}{m}\, m_{u_j,v_j}\bigr) \leq (\|f\|+\|g\|) \sum_{j\in J}\frac{1}{m}\|m_{u_j, v_j}\|\\
&\leq (2+2\theta)\frac{2m}{mn} = \frac{4(1+\theta)}{n}.
\end{align*}
Therefore
\begin{align*}
    g(y) = f(y)-(f-g)(y)\geq \|y\|-\frac{4(1+\theta)}{n},
\end{align*}
and hence
\begin{align*}
    \|m_{p_i,q_i}+y\|&\geq \frac{g}{\|g\|}(m_{p_i,q_i}+y)
    \geq \frac{1}{1+2\theta} \bigl(\frac{g(p_i)-g(q_i)}{d(p_i,q_i)}+g(y)\bigr)\\&\geq \frac{1}{1+2\theta} \bigl(1+\|y\|-\frac{4(1+\theta)}{n}\bigr)\\
    &> 1+\|y\|-2\theta(1+\|y\|)-\frac{4}{n}\geq 1+\|y\|-\varepsilon.
\end{align*}
\end{proof}

\section*{Acknowledgements}
This work was supported by the Estonian Research
Council grant (PRG1901).
The research of A.~Ostrak was supported by the University of Tartu ASTRA Project PER ASPERA, financed by the European Regional Development Fund.

\bibliographystyle{amsplain}
\bibliography{references}

\providecommand{\bysame}{\leavevmode\hbox to3em{\hrulefill}\thinspace}
\providecommand{\MR}{\relax\ifhmode\unskip\space\fi MR }
% \MRhref is called by the amsart/book/proc definition of \MR.
\providecommand{\MRhref}[2]{%
  \href{http://www.ams.org/mathscinet-getitem?mr=#1}{#2}
}
\providecommand{\href}[2]{#2}
\begin{thebibliography}{10}

\bibitem{MR4073466}
Trond~A. Abrahamsen, Petr H\'{a}jek, and Stanimir Troyanski, \emph{Almost
  square dual {B}anach spaces}, J. Math. Anal. Appl. \textbf{487} (2020),
  no.~2, 124003, 11. \MR{4073466}

\bibitem{MR3415738}
Trond~A. Abrahamsen, Johann Langemets, and Vegard Lima, \emph{Almost square
  {B}anach spaces}, J. Math. Anal. Appl. \textbf{434} (2016), no.~2,
  1549--1565. \MR{3415738}

\bibitem{MR4023351}
Trond~A. Abrahamsen, Olav Nygaard, and M\"{a}rt P\~{o}ldvere, \emph{New
  applications of extremely regular function spaces}, Pacific J. Math.
  \textbf{301} (2019), no.~2, 385--394. \MR{4023351}

\bibitem{MR3912824}
Antonio Avil\'{e}s and Gonzalo Mart\'{i}nez-Cervantes, \emph{Complete metric
  spaces with property {$(Z)$} are length spaces}, J. Math. Anal. Appl.
  \textbf{473} (2019), no.~1, 334--344. \MR{3912824}

\bibitem{MR3466077}
Julio Becerra~Guerrero, Gin\'{e}s L\'{o}pez-P\'{e}rez, and Abraham Rueda~Zoca,
  \emph{Some results on almost square {B}anach spaces}, J. Math. Anal. Appl.
  \textbf{438} (2016), no.~2, 1030--1040. \MR{3466077}

\bibitem{MR3985517}
Bernardo Cascales, Rafael Chiclana, Luis~C. Garc\'{i}a-Lirola, Miguel
  Mart\'{i}n, and Abraham Rueda~Zoca, \emph{On strongly norm attaining
  {L}ipschitz maps}, J. Funct. Anal. \textbf{277} (2019), no.~6, 1677--1717.
  \MR{3985517}

\bibitem{MR3794100}
Luis Garc\'{i}a-Lirola, Anton\'{i}n Proch\'{a}zka, and Abraham Rueda~Zoca,
  \emph{A characterisation of the {D}augavet property in spaces of {L}ipschitz
  functions}, J. Math. Anal. Appl. \textbf{464} (2018), no.~1, 473--492.
  \MR{3794100}

\bibitem{MR3619230}
Luis Garc\'{i}a-Lirola and Abraham Rueda~Zoca, \emph{Unconditional almost
  squareness and applications to spaces of {L}ipschitz functions}, J. Math.
  Anal. Appl. \textbf{451} (2017), no.~1, 117--131. \MR{3619230}

\bibitem{MR2030906}
Gilles Godefroy and Nigel~J. Kalton, \emph{Lipschitz-free {B}anach spaces},
  Studia Math. \textbf{159} (2003), no.~1, 121--141, Dedicated to Professor
  Aleksander Pe\l czy\'{n}ski on the occasion of his 70th birthday.
  \MR{2030906}

\bibitem{MR3917942}
Rainis Haller, Johann Langemets, Vegard Lima, and Rihhard Nadel,
  \emph{Symmetric strong diameter two property}, Mediterr. J. Math. \textbf{16}
  (2019), no.~2, Paper No. 35, 17. \MR{3917942}

\bibitem{HOP}
Rainis Haller, Andre Ostrak, and M\"art P\~oldvere, \emph{Diameter two
  properties for spaces of {L}ipschitz functions}, in preparation, May 2022.

\bibitem{MR2379289}
Yevgen Ivakhno, Vladimir Kadets, and Dirk Werner, \emph{The {D}augavet property
  for spaces of {L}ipschitz functions}, Math. Scand. \textbf{101} (2007),
  no.~2, 261--279. \MR{2379289}

\bibitem{MR3235311}
Damian Kubiak, \emph{Some geometric properties of the {C}es\`aro function
  spaces}, J. Convex Anal. \textbf{21} (2014), no.~1, 189--200. \MR{3235311}

\bibitem{MR4183387}
Eve Oja, Natalia Saealle, and Indrek Zolk, \emph{Quantitative versions of
  almost squareness and diameter $2$ properties}, Acta Comment. Univ. Tartu.
  Math. \textbf{24} (2020), no.~1, 131--145. \MR{4183387}

\bibitem{MR3792558}
Nik Weaver, \emph{Lipschitz algebras}, World Scientific Publishing Co. Pte.
  Ltd., Hackensack, NJ, 2018, Second edition of [MR1832645]. \MR{3792558}

\end{thebibliography}
%\printbibliography

\end{document}